\documentclass[12pt,oneside]{amsart}

\usepackage{amsmath}

\setlength{\textwidth}{15cm}
\setlength{\textheight}{20cm}
\calclayout

\usepackage{amsfonts,amssymb,amsthm,enumitem}
\usepackage{mathtools}
\usepackage{nicefrac,setspace}
\usepackage{xcolor}
\usepackage{tikz}
\usepackage{circuitikz}

\usepackage{bm}

%





\usepackage{hyperref}       
\usepackage{url}            
\usepackage{booktabs}       
\usepackage{amsfonts}       
\usepackage{nicefrac}       
\usepackage{microtype}      

\usepackage{amsthm}

\theoremstyle{plain}
\newtheorem{theorem}{Theorem}[section]
\newtheorem{lemma}[theorem]{Lemma}
\newtheorem{corollary}[theorem]{Corollary}

\newtheorem{question}[theorem]{Question}

\newtheorem*{theorem*}{Theorem}
\newtheorem*{lemma*}{Lemma}
\newtheorem*{corollary*}{Corollary}
\newtheorem*{claim*}{Claim}

\theoremstyle{definition}
\newtheorem{definition}[theorem]{Definition} 

\theoremstyle{remark}
\newtheorem*{remark}{Remark}                

\newcommand{\N}{{\mathbb N}}

\newcommand{\R}{\mathbb{R}}

\newcommand{\gr}{\omega} 

\newcommand{\showcomments}{yes}

\newsavebox{\commentbox}
%
{\ifthenelse{\equal{\showcomments}{yes}}%
{\footnotemark
    \begin{lrbox}{\commentbox}
    \begin{minipage}[t]{1.25in}\raggedright\sffamily\tiny
    \footnotemark[\arabic{footnote}]}
{\begin{lrbox}{\commentbox}}}%
{\ifthenelse{\equal{\showcomments}{yes}}%
{\end{minipage}\end{lrbox}\marginpar{\usebox{\commentbox}}}
{\end{lrbox}}}

\subjclass[2020]{
60B20, 
 05C80, 
20E07, 
20E05, 
20F65. 
}
\keywords{Growth of groups, free groups, random graphs, non-backtracking matrix}
\date{\today}

\thanks{M.L.  received funding  from the European Union’s Horizon 2020 research and innovation program under the
Marie Sklodowska-Curie grant agreement No 101034255.
D.T.W.  supported by NSERC}

\title{Subgroups of a free group with every growth rate}
\author{Michail Louvaris}
\address{Laboratoire d’informatique Gaspard Monge (LIGM / UMR 8049), Université Gustave Eiffell, Marne-la-Vallée}
\email{michail.louvaris@univ-eiffel.fr}

\author{Daniel T. Wise}
\address{Department of Mathematics, Weizmann Institute of Science, Rehovot, Israel}
\email{daniel.wise@weizmann.ac.il}

\author{Gal Yehuda}
\address{Dept of Math, Yale, New Haven, USA}
\email{gal.yehuda@yale.edu}


\begin{document}
\vspace*{-1cm} 


\begin{abstract}
For every $\alpha \in [1,2r-1]$, we show there exists a subgroup $H<F_r$ whose growth rate is $\alpha$. 
\end{abstract}

\maketitle

\vspace{-.4cm}

\section{Introduction}
An interesting aspect of the coarse geometry of a finitely generated group~$\Gamma$
is its \emph{growth function}
\[
f_{(\Gamma,S)}(n)=\#\bigl\{g\in\Gamma : \|g\|_{S}\le n\bigr\},
\]
defined with respect to a finite generating set~$S$ and the associated word length~$\|g\|_{S}$.

The subject originated in Milnor’s comparison of Riemannian volume growth with word-metric growth of a fundamental group.
Wolf refined this viewpoint, Gromov proved the landmark theorem that a finitely generated group has polynomial growth precisely when it is virtually nilpotent, and Grigorchuk’s celebrated 1984 example revealed  the landscape also contains groups of intermediate (subexponential) growth.

Our focus will be the \emph{exponential growth rate}
\[
\operatorname{gr}(\Gamma,S) 
\ = \ 
\lim_{n\to\infty} f_{(\Gamma,S)}(n)^{1/n} \ \in \ [1,\infty).
\]
Different generating sets yield comparable—though not identical—values, so we fix a preferred~$S$ whenever we speak of \emph{the} growth rate.
For a comprehensive treatment of growth rates, see \cite{mann2011groups}.

When $\Gamma$ is a free group $F_r$ of rank~$r\ge2$ and $S$ is the usual free basis,
the Cayley graph is the degree $(2r-1)$ tree $T_{2r-1}$. 
A direct computation shows $\gr(F_r,S)=2r-1$.
If $H\le F_r$ is an arbitrary subgroup we measure its growth by
\[
\gr_{F_r}(H)\;=\; \lim_{n\to\infty} f_{_{(H,F_r,S)}}(n)^{1/n}  \ \in \ [1,2r-1],
\]
%
where $f_{_{(H,F_r,S)}}(n)$ counts the number of words in~$H$ of length~$n$ (in  $T_{2r-1}$ ).
At the extremes, cyclic subgroups have growth~$1$
while finite‐index subgroups share the growth of~$F_r$.
A natural, question is:

\begin{quote}
\emph{Which  values are attained by $\gr_{F_r}(H)$ as $H$ ranges over all subgroups?}
\end{quote}

It was proven in \cite{louvaris2024density} that the set of growth rates of finitely generated subgroups of $F_r$ is dense in the interval $[1,2r-1]$. 
It is natural to ask if every $\alpha \in [1,2r-1]$ arises as the growth rate of some subgroup.
If we restrict to finitely generated subgroups the answer is no, as the growth rate of a finitely generated subgroup is an algebraic number (see Remark ~\ref{remark:growth-is-algebraic}). 

However, if $H \leq F_r$ is allowed to be infinitely generated, we prove:
\begin{theorem}[Main Theorem]\label{thm:main}
    Let $r\ge2$ and $\alpha\in[1,2r-1]$.
    There exists a subgroup $H\le F_r$ whose
    growth rate within~$F_r$ equals~$\alpha$. That is
    \[
    \gr_{F_r}(H)=\alpha.
    \]
\end{theorem}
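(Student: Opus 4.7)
My plan is to combine the density result of \cite{louvaris2024density} with an explicit Stallings-graph construction. Since $\gr_{F_r}(H)$ equals the exponential growth rate of reduced closed walks at the basepoint of the Stallings graph $\Gamma_H$, the task becomes graph-theoretic: I will construct an infinite folded labeled graph $(\Gamma, v_0)$ whose rate of reduced closed walks at $v_0$ equals~$\alpha$, and let $H$ be the corresponding subgroup. The endpoints $\alpha = 1$ and $\alpha = 2r-1$ are trivially realized by a cyclic subgroup and by $F_r$ itself, so I assume $\alpha \in (1, 2r-1)$.

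I would first use density to choose finitely generated subgroups $H_k \le F_r$ with $\gr_{F_r}(H_k) = \alpha_k$ and $\alpha_k \nearrow \alpha$; let $(\Gamma_k, v_0^{(k)})$ be their finite Stallings graphs. I would then take the skeleton to be an infinite ray rooted at $v_0$ labeled by an appropriate non-repeating reduced word, and attach a copy of each $\Gamma_k$ at a skeleton vertex $p_k$ of depth $L_k$: identify $p_k$ with a vertex $w_k$ of $\Gamma_k$ having a free label slot compatible with the attaching edge (since $r \ge 2$ and $\Gamma_k$ has boundary vertices, such a $w_k$ exists). Let $H$ be the subgroup corresponding to the pointed folded graph $(\Gamma, v_0)$.

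For the lower bound $\gr(H) \ge \alpha$: for every $k$, each reduced closed walk $\gamma$ in the attached copy of $\Gamma_k$ at $w_k$ avoiding the attaching edge at both endpoints extends to a reduced closed walk $\sigma_k \gamma \sigma_k^{-1}$ at $v_0$, where $\sigma_k$ is the unique reduced skeleton path from $v_0$ to $p_k$. Since closed walks at $w_k$ in $\Gamma_k$ grow at rate $\alpha_k$, this yields $\gr(H) \ge \alpha_k$ for every $k$, hence $\gr(H) \ge \alpha$. For the upper bound, I would decompose each reduced closed walk at $v_0$ into a concatenation of excursions---each leaving $v_0$, entering exactly one $\Gamma_k$, and returning to $v_0$---encode them by generating functions $E_k(z)$ satisfying $E_k(z) \le C_k z^{2L_k}/(1 - \alpha_k z)$, and choose $L_k$ growing fast enough that $\sum_k E_k(z)$ converges on $|z| \le 1/\alpha$ to a value strictly less than $1$. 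This makes the total closed-walk generating function at $v_0$, essentially of the form $1/(1 - \sum_k E_k(z))$, have radius of convergence exactly $1/\alpha$, forcing $\gr(H) \le \alpha$.

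The main obstacle will be calibrating the depths $L_k$ in the upper bound: despite $\alpha_k \to \alpha$, the aggregate $\sum_k E_k(z)$ must converge on $|z| < 1/\alpha$ and remain strictly below $1$ there, which requires $L_k$ to grow appropriately in terms of $1/(\alpha - \alpha_k)$. The excursion decomposition must also honestly account for reducedness at $v_0$ between excursions, which is where the precise choice of skeleton and attaching labels enters. A secondary point is ensuring foldedness of the concrete construction, expected to be handled by choosing the skeleton labels and attaching vertices $w_k$ so that the required label slots are available at each step.
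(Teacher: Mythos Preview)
Your construction and your lower bound are essentially those of the paper: one takes finite Stallings graphs $\Gamma_k$ with $\gr(\Gamma_k)=\alpha_k\nearrow\alpha$ and hangs them off a ray at depths $L_k$, and monotonicity of growth under subgraphs gives $\gr(H)\ge\alpha_k$ for all $k$.

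The gap is in your upper bound. Your decomposition ``each excursion leaves $v_0$, enters exactly one $\Gamma_k$, and returns to $v_0$'' is false on this graph. A reduced closed walk at $v_0$ can travel along the ray to $p_1$, enter $\Gamma_1$ and perform a loop there, exit back to $p_1$, and then continue \emph{further along the ray} (this is still non-backtracking) to $p_2$, enter $\Gamma_2$, loop, and only then return to $v_0$. Such a walk is a single first-return excursion at $v_0$ that visits several $\Gamma_k$'s. Consequently the closed-walk generating function is not of the form $1/(1-\sum_k E_k(z))$ with the $E_k$ you wrote down, and the bound $E_k(z)\le C_k z^{2L_k}/(1-\alpha_k z)$ does not capture the true excursion generating function. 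A generating-function proof is possible, but it requires a first-return decomposition \emph{at each branching vertex $p_k$ along the ray} (a transfer along the ray, with a loop factor at each $p_k$ coming from excursions into $\Gamma_k$), not at $v_0$.

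The paper sidesteps this combinatorics entirely and argues spectrally. It first proves a finite gluing estimate: if one joins finite graphs $G_1,G_2$ by a path of length $k$, then the growth of the result is $\max\{\gr(G_1),\gr(G_2)\}+O(m^{-k})$, established by exhibiting an approximate Perron vector and applying the Collatz--Wielandt inequality to the non-backtracking matrix. Choosing the $L_k$ inductively then forces the finite truncations $G_n$ to satisfy $\gr(G_n)\in(\alpha_n,\alpha_{n+1})$. The passage to the infinite union is handled by a general statement (Theorem~\ref{thm:increasing_union}): for any increasing exhaustion $J=\bigcup_i J_i$ by finite based subgraphs, $\gr(\pi_1 J)=\lim_i\gr(\pi_1 J_i)$, which follows from Seneta's finite-truncation theorem for infinite irreducible non-negative matrices applied to $B_{J'}$. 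This gives $\gr(G)=\lim_n\gr(G_n)=\alpha$ directly, with no excursion bookkeeping.
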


This is a consequence of the following attractive result, which is of independent interest:
\begin{theorem}\label{thm:increasing_union}
Let $J\rightarrow B$ be an immersion of graphs where $B$ is a finite bouquet of circles.
Let $J_i$ be an increasing sequence of  finite connected based subgraphs with $J=\cup_{i=1}^\infty J_i$.
Let $H=\pi_1J$ and for each $i$, let $H_i=\pi_1J_i$, and regard these as subgroups of $F=\pi_1B$.
Then $$\lim_{i\to \infty} \omega_{_F} (H_i) \ = \ \omega_{_F} (H).$$
\end{theorem}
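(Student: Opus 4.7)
\emph{Plan.} The inequality $\lim_i \omega_F(H_i) \le \omega_F(H)$ is immediate: since $H_i \le H$ inside $F$ with preserved word length, pointwise $f_{H_i}(n) \le f_H(n)$, and the sequence $\omega_F(H_i)$ is nondecreasing in $i$ and bounded by $\omega_F(H)$. So the limit $L := \lim_i \omega_F(H_i)$ exists and satisfies $L \le \omega_F(H)$.

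For the opposite inequality, the basic observation is a localization principle: every element of $H$ of length $n$ in $F$ is realized by a reduced closed walk of length $n$ at the basepoint of $J$, and such a walk is contained in the finite ball $B_n$ of radius $n$ about the basepoint. Since $J=\bigcup_i J_i$ is an increasing union and $B_n$ is finite, there exist indices $i(n)\to\infty$ with $B_n\subseteq J_{i(n)}$. Consequently the two growth functions agree at scale $n$: $f_{H_{i(n)}}(n)=f_H(n)$, and likewise for the counts $c_H(n)$ of reduced closed walks of length exactly $n$.

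To convert this scale-by-scale agreement into an asymptotic comparison, I would establish an approximate supermultiplicativity: for any finite based subgraph $K$ of $J$, $c_K(n+m)\ge \delta\, c_K(n)\, c_K(m)$, where $\delta>0$ depends only on $r$. This is a concatenation estimate --- two reduced closed walks at the basepoint of $K$ of lengths $n$ and $m$ concatenate to a reduced closed walk of length $n+m$ unless the joint backtracks, and the fraction of bad joints is controlled by the distribution of first/last edges at the basepoint, which has at most $2r$ possibilities. Iterating yields $c_K(kn)\ge \delta^{k-1}c_K(n)^k$, and taking $(kn)$-th roots as $k\to\infty$ gives $\omega_F(\pi_1 K)\ge \delta^{1/n}\,c_K(n)^{1/n}$. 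Applied with $K=J_{i(n)}$ and combined with localization, $\omega_F(H_{i(n)})\ge \delta^{1/n}\,c_H(n)^{1/n}$. Choosing $n$ so that $c_H(n)^{1/n}$ approaches $\omega_F(H)=\limsup_n c_H(n)^{1/n}$ and using $\delta^{1/n}\to 1$ then gives $L\ge \omega_F(H)$.

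The main obstacle is the uniformity of the supermultiplicativity constant $\delta$ across all finite based subgraphs. A direct count of non-backtracking concatenations succeeds when the first/last-edge distribution of length-$n$ closed walks at the basepoint is not too concentrated, which is the Perron--Frobenius regime (irreducible, aperiodic non-backtracking matrix). Degenerate $J_i$ --- bipartite, low-degree basepoint, reducible non-backtracking matrix --- require separate treatment: passage to the essential core removes reducibility, a parity argument handles bipartiteness, and a basepoint of degree one forces $\omega_F(H_i)\le 1$, in which case the theorem is trivial.
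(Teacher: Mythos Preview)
The localization step and the easy inequality are correct, but the supermultiplicativity claim fails as stated. The assertion $c_K(n+m)\ge\delta\,c_K(n)\,c_K(m)$ with $\delta=\delta(r)$ independent of $K$ is false: take $K$ to be a ``lollipop'' where the basepoint $v$ is joined by an arc to a graph of large growth. Every reduced closed walk at $v$ starts with the unique outgoing edge and ends with its reverse, so \emph{every} concatenation backtracks and $\delta=0$. Your patch---``a basepoint of degree one forces $\omega_F(H_i)\le1$''---is simply incorrect: in this example $\omega_F(\pi_1K)$ equals the growth rate of the attached graph, which can be anything in $(1,2r-1)$. Passing to the core moves the basepoint and destroys the identity $c_{J_{i(n)}}(n)=c_H(n)$ on which your argument rests. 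Even when the basepoint has degree $\ge2$ in the core, the first/last-edge distribution can be arbitrarily concentrated on a single edge (attach a long cycle to one side and a high-growth graph to the other), so no uniform $\delta$ exists.

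A small repair rescues your strategy: replace $c_K(n)$ by the single diagonal entry $(B_K^{\,n})_{e,e}$ of the non-backtracking matrix at a fixed directed edge $e$. For any non-negative matrix one has $(B^{\,n+m})_{e,e}\ge(B^{\,n})_{e,e}(B^{\,m})_{e,e}$, so Fekete gives $\omega_F(\pi_1K)\ge(B_K^{\,n})_{e,e}^{1/n}$ for every $n$; combined with localization and the identification $\limsup_n(B_{J'}^{\,n})_{e,e}^{1/n}=\omega_F(H)$ (after passing to the core $J'$ so that $B_{J'}$ is irreducible), this finishes the proof. The paper takes a different route altogether: it does not concatenate walks, but passes to cores to make all non-backtracking matrices irreducible and then invokes Seneta's theorem (Lemma~\ref{lemma:finite-approximation}) that the growth rate of an infinite irreducible non-negative matrix equals the supremum of the Perron--Frobenius eigenvalues over any exhausting sequence of irreducible finite truncations.
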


\subsection{Sketch of Theorem~\ref{thm:main}}
Theorem~\ref{thm:main} is proven in Section~\ref{sec:constructing_graph}, which describes how to create a sequence of subgraphs to which Theorem~\ref{thm:increasing_union} can be applied.

Every subgroup $H\le F_r$ corresponds to a  based  graph~$G$ immersed in the bouquet~$B_r$, and graphs with maximal degree bounded by $2r$ correspond to subgroups of $F_r$. 
When $G$ is finite, the growth rate of~$H$ coincides with the spectral radius of the non–backtracking matrix~$B_G$.
This is explained in Section~\ref{nbm-facts}.

Using the density theorem of~\cite{louvaris2024density} we choose a sequence of \emph{finite} graphs $H_1,H_2,\dots$ whose spectra converge to~$\alpha$.
We  glue these graphs together along an
infinite path, as shown in Figure ~\ref{fig:glueing_graphs}.
It is very intuitive  that the growth rate of the resulting (infinite) graph should be $\alpha$ when the arcs joining them are sufficiently long.

\begin{figure}
    \centering
    \resizebox{1\textwidth}{!}{%
    \begin{circuitikz}
        \tikzstyle{every node}=[font=\Huge]
        \node at (5,15) [circ] {};
        \node at (6.25,15) [circ] {};
        \node at (8.75,15) [circ] {};
        \node at (13.75,15) [circ] {};
        \draw [ dashed] (5,14.07) circle (0.4cm);
        \draw (5,15) to[short] (6.25,15);
        \draw (6.25,15) to[short] (8.75,15);
        \draw (8.75,15) to[short] (13.75,15);
        \draw (13.75,15) to[short] (15.75,15);
        \draw [ dashed] (6.25,13.9) circle (0.6cm);
        \draw [ dashed] (8.75,13.7) circle (0.8cm);
        \draw [ dashed] (13.75,13.5) circle (1cm);
        \node [font=\normalsize] at (8.75,14) {$H_2$};
        \node [font=\normalsize] at (6.25,14) {$H_1$};
        \node [font=\normalsize] at (5,14) {$H_0$};
        \node [font=\normalsize] at (13.75,14) {$H_3$};
        \node [font=\Huge] at (16.25,15) {$...$};
        \draw (5,15) to[short] (5,14.5);
        \draw (6.25,15) to[short] (6.25,14.5);
        \draw (8.75,15) to[short] (8.75,14.5);
        \draw (13.75,15) to[short] (13.75,14.5);
        \end{circuitikz}
    }%
    \caption{Construction of graph with prescribed growth.}
    \label{fig:glueing_graphs}
\end{figure}
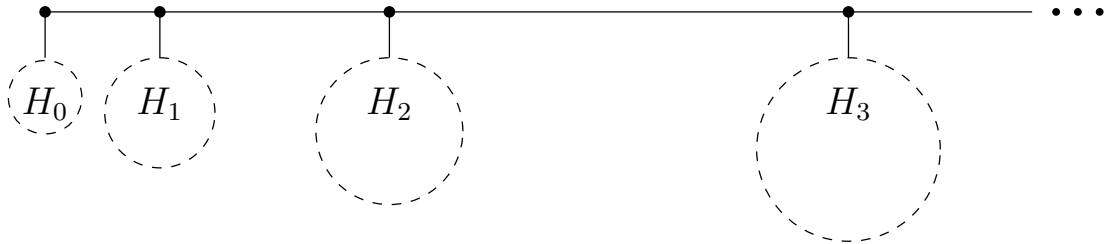

\section{Subgroups of the free group}
Our main tool for studying the subgroups of the free group is through graphs and graph immersions, as  popularized by Stallings \cite{Stallings83}. 

For a locally finite graph $G = (V, E)$ and a vertex $v \in V(G)$, denote by $N_G(v)$ the set of edges with (at least) one endpoint equal to $v$. 

\begin{definition}
    Let $A$ and $B$ be two directed graphs. 
    An \emph{immersion} of graphs is a function $f: A\rightarrow B$ mapping vertices to vertices and edges to edges, which is locally injective: $f_v : N_A(v) \rightarrow N_B(f(v))$ is injective for each $v \in V(A)$. 
\end{definition}

\begin{lemma}\label{lem:immersion pi_1 injective} Let $(A,a)$ and $(B,b)$ be directed based graphs.
Let $f: A\rightarrow B$ be a basepoint preserving immersion. 
Then $f_*: \pi_1(A,a)\rightarrow \pi_1(B,b)$ is injective.
\end{lemma}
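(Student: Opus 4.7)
The plan is to represent elements of $\pi_1(A,a)$ by reduced edge-paths (combinatorial loops with no backtracking) and to show that the immersion carries reduced loops to reduced loops. Since the fundamental group of a graph is free and a reduced loop of positive combinatorial length represents a nontrivial element, this will give injectivity.

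First I would recall the standard fact that for a graph $A$, every element of $\pi_1(A,a)$ has a unique representative that is a reduced combinatorial loop based at $a$, that is, a concatenation $e_1 e_2 \cdots e_n$ of oriented edges with $e_{i+1}\neq \bar e_i$ for each $i$. A reduced loop is trivial in $\pi_1$ if and only if $n=0$. Let $[\gamma]\in\pi_1(A,a)$ be nontrivial and fix such a reduced representative $\gamma=e_1\cdots e_n$ with $n\ge 1$.

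The core step is to check that $f\circ\gamma = f(e_1)\cdots f(e_n)$ is again a reduced loop in $B$. Let $v$ be the vertex of $A$ where $e_i$ ends and $e_{i+1}$ begins. Then $\bar e_i$ and $e_{i+1}$ both lie in $N_A(v)$, and since $\gamma$ is reduced we have $\bar e_i\neq e_{i+1}$. By the local injectivity of $f$ at $v$, applied to the map $N_A(v)\to N_B(f(v))$, we conclude $f(\bar e_i)\neq f(e_{i+1})$, that is, $\overline{f(e_i)}\neq f(e_{i+1})$. Hence $f\circ\gamma$ has no backtracking and is a reduced loop of positive length in $B$ based at $b=f(a)$, so it represents a nontrivial element of $\pi_1(B,b)$. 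Thus $f_*[\gamma]\neq 1$.

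I do not expect a real obstacle here: the argument is essentially a one-line application of local injectivity to consecutive half-edges. The only subtlety is the orientation bookkeeping, namely ensuring that ``$\bar e_i$ and $e_{i+1}$ both belong to $N_A(v)$'' is handled via the involution on oriented edges, so that local injectivity of $f_v$ really does forbid $f(e_{i+1})=\overline{f(e_i)}$. Everything else is formal once reduced loops are adopted as the normal form for $\pi_1$ of a graph.
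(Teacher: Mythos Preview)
Your argument is correct. You represent elements of $\pi_1(A,a)$ by reduced edge-loops and use local injectivity at each internal vertex to show that the image loop is again reduced; since reduced loops of positive length are nontrivial in the fundamental group of a graph, injectivity follows. The orientation bookkeeping you flag is indeed the only thing to watch, and you handle it correctly.

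The paper takes a slightly different route: rather than arguing directly with reduced loops in $A$ and $B$, it passes to universal covers and observes that an immersion of graphs lifts to an immersion of trees $\widetilde A\to\widetilde B$, and an immersion between trees is necessarily an injection (a backtracking-free path in a tree is determined by its starting point and its image); injectivity on $\pi_1$ then follows, with a reference to Stallings. Your proof unpacks essentially the same mechanism one level down, at the level of reduced loops rather than their lifts, and has the advantage of being completely self-contained without invoking covering space machinery.
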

\begin{proof}
This holds since the induced cover between universal covers is an immersion of trees, and hence an injection. See  \cite[prop~5.3]{Stallings83}. 
\end{proof}

Let $B_r$ be a graph with one vertex and $r$ directed edges.
Letting the basepoint of $B_r$ be its vertex, we have $\pi_1 B_r\cong F_r$, where $F_r$ is a free group of rank~$r$ whose basis corresponds to the directed loops of $B_r$.
If $G$ is a directed graph with an immersion from $G$ to $B_r$ and $v \in V(G)$ is a vertex, then $\pi_1(G, v) \leq F_r$
by Lemma~\ref{lem:immersion pi_1 injective}.


\begin{theorem} \label{thrm:labeling-infinite-graph}
    Let $G = (V, E)$ be an undirected graph whose degrees are bounded by $2r$.
    Then, $G$ can be labeled and directed so that there is an immersion from $G$ to $B_r$ if and only if every finite subgraph of $G$ can be labeled and directed in such a way. 
\end{theorem}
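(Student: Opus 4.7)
The forward direction is immediate: a valid labeled directed structure on $G$ restricts to one on every finite subgraph. For the converse the plan is a standard compactness argument.

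Let $D_e$ denote the (finite) set of possible (orientation, label) assignments for an edge $e\in E$; its size is $2r$. Let $\Omega=\prod_{e\in E}D_e$ with the product topology, which is compact by Tychonoff's theorem. An element $\phi\in\Omega$ is exactly a directed labeling of every edge of $G$; it defines an immersion $G\to B_r$ if and only if at every vertex $v$, the induced map on the incident edges of $v$ is injective into the set of labeled half-edges at the vertex of $B_r$.

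For each finite subgraph $F\subseteq G$ set
\[
K_F=\{\phi\in\Omega:\phi|_{E(F)}\text{ defines an immersion }F\to B_r\}.
\]
The hypothesis furnishes an immersive labeling of $F$, which may be extended arbitrarily off $F$; thus $K_F\ne\varnothing$. The condition defining $K_F$ is a conjunction of local injectivity requirements at the (finitely many) vertices of $F$, each involving only the finitely many edges incident to that vertex, so $K_F$ is closed (in fact clopen) in $\Omega$. The family $\{K_F\}$ has the finite intersection property: for any $F_1,\dots,F_k$, the union $F_1\cup\dots\cup F_k$ is again a finite subgraph of $G$, and any immersive labeling of it (extended arbitrarily) lies in each $K_{F_i}$. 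By compactness, $\bigcap_F K_F\ne\varnothing$.

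Any $\phi$ in this intersection is a global solution. Indeed, for every vertex $v\in V$, the star $F_v\subseteq G$ at $v$ is a finite subgraph (finite because $\deg v\le 2r$), so $\phi\in K_{F_v}$ and therefore $\phi$ is locally injective at $v$; as this holds at all vertices, $\phi$ defines the desired immersion $G\to B_r$. There is no genuine obstacle. The only bookkeeping requiring care is (i) interpreting orientation as a relative notion at each endpoint so that the local injectivity condition at $v$ becomes a constraint on the finitely many edges of $N_G(v)$ alone, and (ii) the compactness of $\Omega$, which is a product of finite sets; for a countable graph the Tychonoff argument could equally be replaced by König's lemma on a tree of partial labelings of an enumeration $e_1,e_2,\dots$ of $E$.
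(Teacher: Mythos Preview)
Your proof is correct and follows essentially the same compactness argument as the paper: define the product space of all edge labelings, use Tychonoff, show the sets of labelings proper on each finite subgraph are closed with the finite intersection property, and conclude. If anything, your version is slightly more explicit in verifying that an element of the intersection yields a global immersion by testing against the star $F_v$ at each vertex.
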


\begin{remark}
    Theorem \ref{thrm:labeling-infinite-graph} is more general than what needed for our purposes: we construct an infinite graph by gluing together immersed graphs along a long ray, hence the graph inherits that immersion. 
\end{remark}

If $G$ is a finite, undirected graph whose vertex degrees are  $\leq 2r$, then there is an immersion from $G$ to $B_r$. 
See  \cite[p.~57]{Hatcher:478079}, which we revisit in \cite[lem~2.2]{louvaris2024density}. 
Thus we have the following:
\begin{corollary}\label{cor:creating immersions}
    Let $G = (V, E)$ be an undirected graph whose degrees are bounded by $2r$. 
    Then the edges of $G$ can be labeled and directed so that there is an immersion from $G$ to $B_r$. 
\end{corollary}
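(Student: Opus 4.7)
The plan is to derive the corollary immediately from Theorem~\ref{thrm:labeling-infinite-graph} together with the finite case cited just above it. The corollary differs from the finite case only in allowing $G$ to be infinite, so Theorem~\ref{thrm:labeling-infinite-graph} is precisely the bridge needed.

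First I would verify the trivial observation that if $G$ has all degrees bounded by $2r$, then so does every finite subgraph $G' \subseteq G$, since removing vertices and edges cannot increase the degree of any remaining vertex.

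Next I would invoke the finite result of \cite[p.~57]{Hatcher:478079}, restated as \cite[lem~2.2]{louvaris2024density}, which tells us that any finite graph with maximum degree at most $2r$ can be directed and labeled so as to admit an immersion to $B_r$. Applied to each finite subgraph of $G$, this shows that the hypothesis of Theorem~\ref{thrm:labeling-infinite-graph} is satisfied.

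Finally, applying Theorem~\ref{thrm:labeling-infinite-graph} in the direction that promotes immersibility of all finite subgraphs to immersibility of $G$ itself yields the desired labeling and direction on $G$. The corollary therefore contains no substantive obstacle of its own; the real work lies in Theorem~\ref{thrm:labeling-infinite-graph}, whose proof I expect proceeds by a compactness argument (for instance K\"onig's lemma, or a Tychonoff/inverse-limit construction on the space of partial labelings) to patch together the locally-existing finite labelings into a globally consistent one.
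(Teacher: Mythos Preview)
Your proposal is correct and matches the paper's approach exactly: the corollary is stated as an immediate consequence (``Thus we have the following'') of the finite case from \cite{Hatcher:478079}/\cite{louvaris2024density} combined with Theorem~\ref{thrm:labeling-infinite-graph}. Your guess that Theorem~\ref{thrm:labeling-infinite-graph} is proved by a Tychonoff-style compactness argument on the space of labelings is also on the mark.
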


\begin{proof}[Proof of Theorem \ref{thrm:labeling-infinite-graph}]

The proof is similar to the proof of the de~Bruijn–Erdős Theorem \cite{bruijn1951colour}.
Let $G = (V,E)$ be an undirected graph with maximal degree $2r$.  
Labeling (and ordering) the edges of $G$ means choosing a function $L : E \rightarrow \{1, \ldots, r\} \times \{\pm 1\}$, assigning to each edge of $G$ a letter and a direction. 
We use the notation $L(G)$ for the directed, labeled graph obtained by applying $L$ to $G$. 
A function $L$ is a \emph{proper labeling} of $G$ if there is an immersion $L(G) \rightarrow B_r$. 

Give $\{1, \ldots, r\} \times \{\pm 1\}$ the discrete topology, and define $X$ to be the space of all possible labelings (proper or not) of $G$. 
Then $X$ has the product topology, $X = (\{1, \ldots, r\} \times \{\pm 1\})^{E(G)}$.
By Tychonoff's theorem, $X$ is compact. 

For a finite subgraph $F \subset G$, the following set is closed: 
$$X_F \ = \ \{L \in X : L  \text{ is a proper labeling of } F\}$$
 To see this, observe that a labeling $L$ is proper if and only if every vertex $u \in L(G)$ has neither two outgoing edges nor two incoming edges labeled with the same letter. Hence the following set of non-proper labelings is open: 
\[ X_F^{C} \ = \ \cup_{v \in V(F)} \cup_{e_1 \neq e_2 \in N(v) \times N(v)}  \{L : L(e_1) = L(e_2)\}.\]

Moreover, for finite subgraphs $F_1, \ldots, F_k$ in $G$, we have  
$$X_{F_1} \cap \cdots \cap X_{F_k}
\ = \ 
X_{F_1 \cup \cdots \cup F_m}$$
and since $X_{F_1 \cup \cdots \cup F_m} \neq \emptyset$ by our assumption, the sets $\{X_F : F \text{ is a finite subgraph}\}$ satisfy the finite intersection property. 
The compactness of $X$ implies that $\cap_{F}X_F \neq \emptyset$, which completes the proof, as any  $L \in \cap_{F}X_F$ is a proper labeling for $G$. 
\end{proof}

\subsection{Growth rates of graphs}
Let $G$ be an undirected, connected graph with vertex degrees bounded above by $2r$. Let $L$ be a proper labeling of $G$, and let $v \in G$ be a vertex.
By the discussion above, $\Gamma := \pi_1(L(G),v) \leq \pi_1 (B_r)$. 
We are interested in the growth rate of $\Gamma$ in $\pi_1B_r$, with $\pi_1B_r$ generated by the $r$ labeled edges of $B_r$. 

The map $L$ induces an embedding of every closed non-backtracking loop based at $v$ in $G$ in the Cayley graph of $\pi_1B_r$ (freely generated by the $r$ edges), hence, the growth rate of $\Gamma$ in $F_r$ equals 
\[\gr(G) := \lim_{n \rightarrow \infty} \text{NB}_v(n)^{1/n}, \]
where $\text{NB}_v(n)$ is the number of
 length~$n$ non-backtracking loops based at $v$.
The growth rate does not depend on the vertex $v$.

We approach the proof of Theorem~\ref{thm:main} by constructing a graph $G$ with the appropriate growth rate, as described Section~\ref{sec:constructing_graph}.

\begin{remark}\label{remark:growth-is-algebraic}
    In general, the graph $G$ and the subgroup 
    $\Gamma = \pi_1G$ will not be finitely generated.
    Indeed, there are only countably many finite graphs, so only countably many $\alpha$ could arise. Moreover,
 every finitely generated subgroup $\Gamma \leq F_r$ is $\pi_1$ of a  finite immersed graph $G$, so its growth rate is the Perron-Frobenius eigenvalue of the non-backtracking matrix of $G$, and is thus an algebraic number. 
\end{remark}

\subsection{Linear algebra cheat sheet}\label{nbm-facts}
A major part of our analysis of growth rates of graphs uses Perron-Frobenious Theory for non-negative matrices. 
A detailed treatment of this subject can be found in many texts, e.g.\ \cite{seneta2006non}. 
We mention the relevant facts for our purposes.

Let $T$ be a finite matrix with non-negative entries. 
\begin{definition}\label{def:irreducible_matrix}
    $T$ is \emph{irreducible} if for all $i,j$, there exists $k \in \N$ with $(T^k)_{i,j} > 0$. 
\end{definition}

By the Perron-Frobenious Theorem, the leading eigenvalue $\lambda_1(T)$ of an irreducible non-negative matrix $T$ is a positive  number, with an eigenvector with positive entries.

\begin{lemma}\label{lemma:growth_rate_eigenvalues}
    Let $T$ be an irreducible non-negative matrix.
    Then, 
    \begin{enumerate}
        \item  $\limsup_{k \rightarrow \infty} \big((T^k)_{i,j}\big)^{1/k} = L$ for all $i,j$.
        (So the limit is independent of $i,j$). 
        \label{lemma:growth:first-part}
        \item $L = \lambda_1(T)$, the Perron-Frobenius eigenvalue of $T$. 
    \end{enumerate}
\end{lemma}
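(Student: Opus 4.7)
The plan is to prove (1) via an irreducibility path argument, then sandwich the common value $L$ between matching upper and lower bounds derived from a Perron-Frobenius eigenvector. For part (1), fix pairs $(i,j)$ and $(a,b)$. Definition~\ref{def:irreducible_matrix} provides $m, n \geq 1$ with $c_1 := (T^m)_{i,a} > 0$ and $c_2 := (T^n)_{b,j} > 0$. Non-negativity of entries and the expansion of matrix multiplication yield, for every $k \geq 0$,
\[
(T^{m+k+n})_{i,j} \;\geq\; (T^m)_{i,a} \,(T^k)_{a,b}\,(T^n)_{b,j} \;=\; c_1 c_2 \cdot (T^k)_{a,b}.
\]
Taking $(m+k+n)$-th roots, using that $\tfrac{k}{m+k+n}\to 1$, and passing to $\limsup$ as $k\to\infty$ shows the limsup associated to $(i,j)$ dominates that associated to $(a,b)$; swapping the two pairs gives equality, so a common value $L$ is well defined.

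For part (2), let $v$ be a Perron-Frobenius eigenvector of $T$, so $v_\ell>0$ for all $\ell$ and $T^k v = \lambda_1(T)^k v$ for every $k$. Coordinate-wise this reads $\sum_\ell (T^k)_{i,\ell}\,v_\ell = \lambda_1(T)^k v_i$. Keeping only the $\ell=j$ term gives the upper bound $(T^k)_{i,j}\leq \lambda_1(T)^k v_i/v_j$; taking $k$-th roots yields $L\leq\lambda_1(T)$. For the matching lower bound the same identity gives $\sum_\ell (T^k)_{i,\ell}\geq \lambda_1(T)^k v_i/\max_\ell v_\ell$, so for each $k$ there is an index $\ell(k)$ with $(T^k)_{i,\ell(k)}\geq \lambda_1(T)^k v_i/(n\max_\ell v_\ell)$, where $n$ is the dimension of $T$.

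The main obstacle, and the only subtle point, is that $\ell(k)$ depends on $k$, so the previous estimate does not directly control any fixed entry. The resolution is pigeonhole: the index set $\{1,\dots,n\}$ is finite, so some $\ell^*$ is chosen as $\ell(k)$ along an infinite subsequence of $k$'s. Along that subsequence $(T^k)_{i,\ell^*}$ grows at least like $\lambda_1(T)^k$ up to a multiplicative constant, so $\limsup_k\bigl((T^k)_{i,\ell^*}\bigr)^{1/k}\geq \lambda_1(T)$. By part~(1) this limsup equals $L$, hence $L\geq \lambda_1(T)$, and combined with the upper bound this yields $L=\lambda_1(T)$, completing the proof.
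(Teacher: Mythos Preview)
Your proof is correct. Note that the paper does not actually supply a proof of this lemma: it appears in the ``Linear algebra cheat sheet'' subsection as a standard Perron--Frobenius fact, with the reader referred to texts such as Seneta for details. So there is no paper proof to compare against, but your argument is a clean self-contained verification: the path-bridging inequality for part~(1) is the standard irreducibility trick, and your eigenvector sandwich for part~(2) is a nice way to avoid any spectral decomposition or Jordan-form analysis. The only delicate step is the one you flagged yourself---that the lower bound controls $(T^k)_{i,\ell(k)}$ rather than a fixed entry---and your pigeonhole fix is exactly right, since infinitely many $k$'s landing on the same $\ell^*$ is enough to force the $\limsup$ (not $\liminf$) for that fixed entry to be at least $\lambda_1(T)$.
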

\begin{remark}
    If $T$ is aperiodic, we can replace the $\limsup$ with $\lim$.
\end{remark}

Let $G = (V, E)$ be a graph.
Denote by $\overrightarrow{E}=\{(u,v):\{u,v\}\in E\}$ the set of oriented edges of $G$.

\begin{definition}\label{defn_of_non-backtracking}
        The \emph{non-backtracking matrix} of $G$ is the $|\overrightarrow{E}|\times|\overrightarrow{E}|$ matrix  $B_G$  with entries:
        \begin{align*}
            \big(B_G\big)_{e,f} 
            \ = 
            \begin{cases}
                1 & \text{ if } e = (u,v) \text{ and } f = (z,w) \text{ where  } v = z \text{ and } u \neq w \\
                0 & \text{ otherwise.}
            \end{cases}
        \end{align*}       
\end{definition}

If $G$ is a finite connected graph with minimal degree $\geq 2$, that is not a cycle, then $B_G$ is irreducible \cite{glover2020spectral}.
Thus, $\lambda_1(B_G) = \limsup_n (B^n_{e,f})^{1/n} = \gr(G)$
by Lemma ~\ref{lemma:growth_rate_eigenvalues}.

Our goal is to construct a graph with given growth rate. 
As discussed above, the graph will generally  be infinite so we need to extend the theory to infinite graphs and matrices. 
If $G$ is locally finite and $B_G$ is irreducible, the equality $\limsup_n (B^n_{e,f})^{1/n} = \gr(G)$ remains true: $\gr(G) = \lim_{n\rightarrow\infty} \text{NB}_v(n)^{1/n}$ for $v \in V(G)$, and
\begin{align}\label{sum:nbm}
    \text{NB}_v(n) = \sum_{e=(v, \cdot )} \sum_{f = (\cdot, v)} B^n_{e,f} 
\end{align}
so $\limsup_n \text{NB}_v(n)^{1/n}=\max_{e,f}\{\limsup_n (B_{e,f}^{n})^{1/n}\}$. 
Since $G$ is locally finite, the sum in \eqref{sum:nbm} is finite, and since $B$ is irreducible, $\limsup_n (B_{e,f}^{n})^{1/n}$ does not depend on $e,f$, which concludes the result.


Let $T$ be an infinite (countable) matrix, so that $T^k_{i,j} < \infty$ for all $i,j,k$. 
Definition ~\ref{def:irreducible_matrix} extends naturally to the infinite case, and  Lemma~\ref{lemma:growth_rate_eigenvalues}.\eqref{lemma:growth:first-part}  remains true \cite{seneta1967finite}. 
It follows that if $G$ is an infinite, locally finite, connected  graph with minimal degree $\geq 2$, and contains a cycle, then $B_G$ satisfies the condition of Lemma ~\ref{lemma:growth_rate_eigenvalues} and $\gr(G) = \limsup_k \{(B^k_{e,e'})^{1/k}\}$.

The following result will enable us to compute $\gr(G)$ using finite approximations. 
For an infinite matrix $T$ and an integer $n$, denote by $T_n$ the $\{1, \ldots, n\} \times \{1, \ldots, n\}$ submatrix of $T$, and by $\gr(T) = \sup_{k} \{(T^k_{i,j})^{1/k}\}$.


\begin{lemma}[\hbox{\cite[thm~3]{seneta1968finite}}] \label{lemma:finite-approximation}
    Let $T$ be an infinite, irreducible non-negative matrix, with $T^k_{i,j} < \infty$ for all $i, j, k$.  
    Let $\{R_n\}_{n \in \N}$ be a subsequence of $\N$ with $T_{R_n}$ irreducible for all $R_n$. 
    Then $\gr(T) = \sup_{n} \gr(T_{R_n})$.
\end{lemma}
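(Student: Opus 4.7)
The plan is to prove the equality by two direct inequalities. The direction $\sup_n \gr(T_{R_n}) \le \gr(T)$ will come from a termwise submatrix comparison; the direction $\gr(T) \le \sup_n \gr(T_{R_n})$ will come from entrywise monotone convergence of $k$-th powers combined with the tautological bound $(T_{R_n})^k_{i,j}\le \gr(T_{R_n})^k$ afforded by the supremum definition of $\gr$.

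For the easy direction, observe that $T_{R_n}$ is the principal submatrix of $T$ on indices $\{1,\ldots,R_n\}$, so expanding
\[
(T_{R_n})^k_{i,j} \;=\; \sum_{i_1,\ldots,i_{k-1}\in\{1,\ldots,R_n\}} T_{i,i_1}T_{i_1,i_2}\cdots T_{i_{k-1},j}
\]
exhibits it as a partial sum of the nonnegative expansion of $T^k_{i,j}$. Hence $(T_{R_n})^k_{i,j} \le T^k_{i,j}$ for all $i,j\le R_n$ and all $k$, and taking $k$-th roots and then a supremum over $k$ yields $\gr(T_{R_n}) \le \gr(T)$ for every $n$.

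For the reverse direction, set $\rho := \sup_n \gr(T_{R_n})$ and fix indices $i,j$. Since $R_n \to \infty$, for all $n$ sufficiently large $i,j \le R_n$, and the supremum definition of $\gr(T_{R_n})$ gives
\[
(T_{R_n})^k_{i,j} \;\le\; \gr(T_{R_n})^k \;\le\; \rho^k.
\]
The partial sums appearing on the left are monotone nondecreasing in $R_n$ and dominated by $T^k_{i,j}<\infty$, so by monotone convergence they converge to $T^k_{i,j}$ as $n\to\infty$. Passing to the limit gives $T^k_{i,j}\le \rho^k$. Since this holds for every $k$, taking $k$-th roots and a supremum over $k$ yields $\gr(T)\le \rho$.

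The only non-routine ingredient is the entrywise monotone convergence $(T_{R_n})^k_{i,j}\to T^k_{i,j}$, whose validity rests exactly on the hypothesis $T^k_{i,j}<\infty$; everything else is bookkeeping with the supremum definition of $\gr$ and the fact that each $T_{R_n}$ sits inside $T$ as a principal submatrix.
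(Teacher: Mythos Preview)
The paper does not supply its own proof of this lemma; it is simply quoted from Seneta \cite[thm~3]{seneta1968finite}, so there is no in-paper argument to compare against. Your two-inequality argument via the entrywise bound $(T_{R_n})^k_{i,j}\le T^k_{i,j}$ together with monotone convergence of the truncated power sums is the standard route, and it is correct as written given the paper's literal definition $\gr(T)=\sup_k (T^k_{i,j})^{1/k}$.

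One caveat worth recording: elsewhere in the paper (Lemma~\ref{lemma:growth_rate_eigenvalues}) and in Seneta's original, the quantity that is actually independent of $i,j$ for irreducible $T$ and that equals $\lambda_1$ in the finite case is the $\limsup$, not the $\sup$. Under that reading your ``tautological'' bound $(T_{R_n})^k_{i,j}\le \gr(T_{R_n})^k$ is no longer automatic for off-diagonal entries. The clean fix is to run your hard direction on the diagonal: since $(A^{k+m})_{i,i}\ge A^k_{i,i}\,A^m_{i,i}$ for any nonnegative $A$, Fekete's lemma forces $\sup_k (A^k_{i,i})^{1/k}=\limsup_k (A^k_{i,i})^{1/k}$, so on the diagonal the two readings coincide and your bound really is tautological. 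With $i=j$ fixed, your monotone-convergence step then gives $T^k_{i,i}\le\rho^k$ for every $k$, hence $\gr(T)\le\rho$, exactly as you wrote.
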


\begin{definition}[Core]
Let $D$ be a  based connected graph.
The \emph{core} of $D$ is  
the smallest connected based subgraph $D'\subset D$ containing all cycles of $D$.
Note that $D'$ is either a single vertex,
or $D'$ contains a cycle and has the property that each vertex has degree~$\geq 2$.
Furthermore, the inclusion $D'\rightarrow D$ induces an isomorphism $\pi_1D'\rightarrow \pi_1D$. See \cite{Stallings83}.
\end{definition}

We now have all the ingredients for proving Theorem~\ref{thm:increasing_union}. 
\begin{proof}[Proof of Theorem \ref{thm:increasing_union}]
Let $J'$ be the based core of $J$.
    If $J'$ is finite, the claim holds trivially, so we assume $J'$ is infinite and locally finite.
    The minimal degree in $J'$ is $2$, and $J'$ contains a cycle.
    The following suffices to prove $B_{J'}$ is irreducible: Any directed edges $i,j$ of $J'$,  are contained in a finite subgraph $X$ of $J'$ with minimal degree $2$ which contains a cycle, but not a cycle itself.
    Thus by \cite[prop~2.3]{glover2020spectral} the non-backtracking matrix of $X$ is irreducible, hence  $B_X^k(i,j) \neq 0$ for some $k$. 
    This implies $B^k_{J'}(i,j) \neq 0$.

   Let $\{J_i\}$ be an increasing sequence subgraphs with $J = \cup_i J_i$. 
    It suffices to prove the claim for their cores $\{J'_i\}$ as they have the same $\pi_1$. 
    For sufficiently large $i$, each $J'_i$ has a cycle and minimal degree $2$, so $\{B_{J'_i}\}$ is a sequence of irreducible truncations of $B_{J'}$, and so we can apply Lemma~\ref{lemma:finite-approximation}.
    Observe that $\omega_F(\pi_1 J'_i) \leq \omega_F(\pi_1 J'_{i+1})$
    since $J'_i$ is a subgraph of $J'_{i+1}$, so we can replace $\sup$~by~$\lim$ in Lemma~\ref{lemma:finite-approximation}. 
\end{proof}

\section{Constructing a graph with a given growth rate}\label{sec:constructing_graph}
The goal of this section is to prove:
\begin{theorem} \label{thrm:graph_with_given_growth}
    Let $r \in \N$ and $\alpha \in (1, 2r-1)$. 
    There exists a graph $G$ with degrees bounded by $2r$, such that $\gr(G) = \alpha$.
\end{theorem}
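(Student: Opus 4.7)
The plan is to build $G$ as an increasing union of finite subgraphs $G_n$, each obtained by joining finite ``gadgets'' $H_1,\dots,H_n$ along a common spine via arcs of large prescribed length, and then to invoke Theorem~\ref{thm:increasing_union} to reduce the growth computation to $\lim_n \gr(G_n)$.

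First I would apply the density theorem of \cite{louvaris2024density} to fix a sequence of finite connected graphs $H_1, H_2, \ldots$ immersed in $B_r$ with $\gr(H_n)\nearrow\alpha$ and $\gr(H_n)<\alpha$ for every $n$. After a harmless adjustment such as appending a pendant edge, each $H_n$ can be assumed to have a designated attachment vertex $v_n$ of degree at most $2r-1$. I then construct $G$ exactly as in Figure~\ref{fig:glueing_graphs}: take an infinite spine path with marked points $p_1, p_2, \ldots$ separated by arcs of lengths $\ell_1, \ell_2, \ldots$ to be chosen below, and connect each $v_n$ to $p_n$ by a single edge. All vertex degrees in $G$ are then bounded by $2r$, so $G$ admits an immersion in $B_r$ by Corollary~\ref{cor:creating immersions}. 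Let $G_n$ be the finite subgraph containing $H_1,\dots,H_n$, the spine segment from $p_1$ to $p_n$, and the attachment edges. Then $G=\bigcup_n G_n$, and by Theorem~\ref{thm:increasing_union} we have $\gr(G)=\lim_n \gr(G_n)=\sup_n \gr(G_n)$.

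The lower bound $\sup_n\gr(G_n)\geq\alpha$ is automatic, since $H_n\subset G_n$ and the non-backtracking condition is local, giving $\gr(G_n)\geq\gr(H_n)\to\alpha$. The heart of the proof is the matching upper bound, which I would obtain via a \emph{decoupling lemma}: if $F,H$ are finite connected graphs immersed in $B_r$ whose cores contain a cycle, and $F_\ell$ denotes the result of joining $F$ to $H$ by a path of length $\ell$ attached at prescribed vertices, then $\gr(F_\ell)\to\max(\gr(F),\gr(H))$ as $\ell\to\infty$, with convergence from above. Intuitively, any non-backtracking walk in $F_\ell$ that visits both $F$ and $H$ must expend at least $2\ell$ steps per round trip crossing the bridge, so the contribution of such ``mixed'' walks of length $n$ is exponentially suppressed in $\ell$ relative to walks confined to a single side. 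One can make this rigorous by analyzing the block structure of $B_{F_\ell}$, in which the bridge contributes a long tridiagonal block forcing the spectrum to decouple into those of $B_F$ and $B_H$ as $\ell\to\infty$; equivalently, a generating-function argument that partitions walks by their number of bridge crossings yields the same conclusion. This decoupling lemma is the main technical obstacle.

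Granted the decoupling lemma, the arc lengths are chosen inductively. Assume by induction that $\gr(G_{n-1})<\alpha$ (base case $\gr(G_1)=\gr(H_1)<\alpha$). Set $M_n=\max(\gr(G_{n-1}),\gr(H_n))<\alpha$ and apply the decoupling lemma to $G_{n-1}$ and $H_n$ with slack $\eps=(\alpha-M_n)/2$: choose $\ell_n$ large enough that $\gr(G_n)\leq M_n+\eps=(\alpha+M_n)/2<\alpha$, preserving the inductive hypothesis. Hence $\gr(G_n)<\alpha$ for every $n$, so $\gr(G)=\sup_n\gr(G_n)\leq\alpha$, and combined with the lower bound this gives $\gr(G)=\alpha$.
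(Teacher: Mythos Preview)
Your overall strategy matches the paper's exactly: pick $H_n$ with $\gr(H_n)\nearrow\alpha$ strictly from below, glue them along a spine with arc lengths chosen inductively so that $\gr(G_n)<\alpha$ at every stage, and conclude via Theorem~\ref{thm:increasing_union}. The paper even arranges the slightly sharper condition $\gr(G_n)\in(\alpha_n,\alpha_{n+1})$, but your choice of slack $(\alpha-M_n)/2$ works just as well. One small remark: you don't need to append a pendant edge to get an attachment vertex; since $\gr(H_n)<2r-1$, some vertex of $H_n$ already has degree $<2r$.

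The only substantive difference is in how the decoupling lemma is proved. You sketch a block-matrix or generating-function argument, counting walks by bridge crossings. The paper instead gives a short, clean proof via the Collatz--Wielandt formula: build a positive test vector $\tilde x$ on the directed edges of $G_3$ by using the Perron eigenvectors of $B_{G_1}$ and $B_{G_2}$ on their respective pieces and geometrically decaying weights $m^{-t}$ along the bridge (with $m=\max(\gr(G_1),\gr(G_2))$), then check edge by edge that $(B_{G_3}\tilde x)(e)/\tilde x(e)\le m + C m^{-k}$. This yields the quantitative bound $\eps(k)\le C/m^k$ directly, avoiding the combinatorics of crossing counts. Your sketched approaches can be made to work, but the Collatz--Wielandt test vector is the more efficient route here.
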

The statement of Theorem~\ref{thrm:graph_with_given_growth} with $\alpha \in \{1, 2r-1\}$
is also valid as we can let $G$ be an immersed circle or $2r$-regular graph.

The construction is conceptually simple: it was shown in \cite{louvaris2024density} that given $\alpha$ and $\epsilon > 0$, there exists a finite graph $H$ with $|\gr(H) - \alpha| \leq \epsilon$. 

Starting with a sequence of finite graphs $\{H_n\}$ whose growth rates converge to $\alpha$, one can ``glue'' them together along an infinite path, as in Figure ~\ref{fig:glueing_graphs}. 

The first obstacle to implementing the idea is that some of the graphs we obtain by the method in \cite{louvaris2024density} might have a growth rate larger than $\alpha$. 
The growth rate of a graph is at least the growth rate of any of its subgraphs, so we ensure  $\gr(H_n) < \alpha$. 

\begin{lemma}\label{lemma:sequence_of_finite_graphs}
    Let $\alpha \in (1, 2r-1)$. 
    There is a sequence of graphs $\{H_n\}_{n \in \N}$ such that:
    \begin{enumerate}
        \item For all $n$, $H_n$ is connected, the degrees of $H_n$ are in $\{2, \ldots, 2r\}$, and there is at least one vertex $v_n \in V(H_n)$ such that $\deg(v_n) <2r$. 
        \item $\lim_{n \rightarrow \infty} \gr(H_n) = \alpha$ and , $\gr(H_n) < \gr(H_{n+1}) < \alpha$ for all $n$. 
    \end{enumerate}
\end{lemma}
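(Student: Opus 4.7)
The plan is to invoke the density theorem of \cite{louvaris2024density} on a sequence of targets approaching $\alpha$ from below, pass to cores to enforce minimum degree $\geq 2$, and exploit the fact that a $2r$-regular finite connected graph immersed in $B_r$ is automatically a covering of $B_r$---and hence has growth rate exactly $2r-1$---to obtain the required vertex of degree $<2r$ for free.

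First I would fix a strictly increasing sequence of targets $\beta_n \in (1,\alpha)$ with $\beta_n \to \alpha$ (for instance $\beta_n = \alpha - 2^{-n}(\alpha-1)$), together with tolerances $\epsilon_n < \tfrac{1}{4}\min(\beta_{n+1}-\beta_n,\, \alpha - \beta_n)$. The density result of \cite{louvaris2024density} furnishes, for each $n$, a finite connected graph $G_n$ immersed in $B_r$ with $|\gr(G_n) - \beta_n| < \epsilon_n$. This choice of tolerances guarantees both $\gr(G_n) < \alpha$ for every $n$ and $\gr(G_n) \to \alpha$.

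Next I would replace $G_n$ with its core $G_n'$. The core preserves $\pi_1$ and hence $\gr$, and because $\gr(G_n') > 1$ it cannot be a single vertex, so it has minimum degree $\geq 2$ by the definition of ``core''. To verify the ``some vertex of degree $<2r$'' condition, I would argue by contradiction: a $2r$-regular finite connected graph immersed in $B_r$ has every labeled half-edge of $B_r$ realized at every vertex, so the immersion is a finite covering; then $\pi_1 G_n'$ is a finite-index subgroup of $F_r$ and therefore $\gr(G_n') = 2r-1$, contradicting $\gr(G_n') < \alpha < 2r-1$.

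Finally, to enforce the strict monotonicity $\gr(H_n) < \gr(H_{n+1})$, I would pass to a subsequence: since $\gr(G_n) \to \alpha$ from strictly below, indices $n_1 < n_2 < \cdots$ can be chosen along which the growth rates strictly increase, and one sets $H_k := G_{n_k}$. The main obstacle I would scrutinize is confirming that the graphs produced by \cite{louvaris2024density} can be taken simultaneously connected, immersed in $B_r$, and with degrees bounded by $2r$; given the Stallings-graph framework of that paper this should be built into the construction, but it is the one point where a gap could hide, so I would verify it before declaring the argument complete.
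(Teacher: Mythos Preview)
Your proposal is correct and follows essentially the same approach as the paper. The only cosmetic differences are that the paper chooses the targets $\beta_n = \alpha - \epsilon_n$ \emph{iteratively} (with $0<\epsilon_n < \alpha - \gr(H_{n-1})$) so that strict monotonicity is automatic rather than obtained by passing to a subsequence, and it observes that \cite[thm~3.2]{louvaris2024density} already produces connected graphs with degrees in $\{2,\dots,2r\}$, so the passage to cores is unnecessary.
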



\begin{proof}
    Let $\beta \in (1,2r-1)$. 
    By
    \cite[thm~3.2]{louvaris2024density}, there exists a sequence of graphs $\{T_k\}_{k \in \N}$ such that $|\gr(T_k) - \beta| \leq \frac{C}{\log k}$ for some constant $C$.     
    We construct the sequence $H_n$.
    To get $H_1$, invoke the Theorem with $\beta_1 = \alpha  - \epsilon_1$ for some $\epsilon_1 > 0$. 
    We get a sequence $\{T_k\}$ so that $|\gr(T_k) - \alpha + \epsilon_1| \leq \frac{C}{\log k}$, thus there is some $k$ with $T_k$ satisfying $\gr(T_k) < \alpha$.
    Set $H_1$ to be this $T_k$. 
    Observe that \cite[thm~3.2]{louvaris2024density} guarantees that the degrees of $T_k$ are in $\{2, \ldots, 2r\}$ and that $T_k$ is connected. 
    Since $\gr(T_k) < \alpha < 2r-1$, there is a vertex with degree $<2r$. 
    
    Continue the construction similarly. 
    To get $H_n$, invoke the Theorem with $\beta_n = \alpha - \epsilon_n$, where $0 < \epsilon_n < \alpha - \gr(H_{n-1})$ and $\epsilon_n \rightarrow 0$. 
\end{proof}

Another ingredient in the gluing-along-infinite-path construction is gluing. 
But gluing where? Lemma ~\ref{lemma:new_graph} gives a quantitative answer. 

\begin{lemma}\label{lemma:new_graph}
    Let $G_1$ and $G_2$ be two finite graphs, whose minimal degree is at least $2$ and where both $G_1$ and $G_2$ are not cycles. 
    Create a new graph $G_3$ by connecting a path of length $k$ to $G_1$ from one end, and to $G_2$ from the other end. 
    Then $\gr(G_3) = \max\{\gr(G_1), \gr(G_2)\} + \epsilon(k)$, where $\epsilon(k) \rightarrow 0$ as $k \rightarrow \infty$.
    More precisely, $\epsilon(k) \leq C/m^k$, where $m = \max\{\gr(G_1), \gr(G_2)\}$. 
\end{lemma}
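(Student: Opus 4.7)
The lower bound $\gr(G_3)\ge\max\{\gr(G_1),\gr(G_2)\}=:m$ is immediate: every non-backtracking loop in $G_i$ is also a non-backtracking loop in $G_3$. For the upper bound, set $\lambda:=\gr(G_3)$. Since $G_3$ is finite, connected, has minimum degree $\ge 2$ and is not a cycle, $B_{G_3}$ is irreducible, so by Perron--Frobenius $\lambda$ is its leading eigenvalue with a strictly positive eigenvector $\phi$. We may assume $\lambda>m$, else the claim is trivial. The plan is to exploit the degree-$2$ structure along the connecting path to collapse the eigenvalue equation into a single scalar identity relating $\lambda$ and $k$ to the resolvents of $B_{G_1}$ and $B_{G_2}$.

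Label the path edges $p_1,\dots,p_k$ (directed from $u_1\in V(G_1)$ to $u_2\in V(G_2)$), with reverses $\bar p_1,\dots,\bar p_k$, and write $\phi_i^-(u_i)$ for the sum of $\phi$ over the edges of $G_i$ emanating from $u_i$. Because each internal path vertex has degree $2$, the eigenvector equation at $p_i$ for $i<k$ reduces to $\lambda\phi(p_i)=\phi(p_{i+1})$, whence $\phi(p_k)=\lambda^{k-1}\phi(p_1)$, and similarly $\phi(\bar p_1)=\lambda^{k-1}\phi(\bar p_k)$. At the endpoints one has $\lambda\phi(p_k)=\phi_2^-(u_2)$ and $\lambda\phi(\bar p_1)=\phi_1^-(u_1)$. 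Restricted to edges of $G_1$, the eigenvalue equation for $B_{G_3}$ differs from that for $B_{G_1}$ only at edges ending at $u_1$, which acquire the additional successor $p_1$ in $G_3$; this yields
$$(\lambda I-B_{G_1})\,\phi|_{G_1}\;=\;\phi(p_1)\,\chi_1,$$
where $\chi_1$ is the indicator of edges of $G_1$ ending at $u_1$. Since $\lambda>m_1$, the resolvent $(\lambda I-B_{G_1})^{-1}=\sum_{n\ge 0}\lambda^{-(n+1)}B_{G_1}^n$ converges entrywise, so $\phi|_{G_1}=\phi(p_1)(\lambda I-B_{G_1})^{-1}\chi_1$. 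Pairing with the indicator of edges of $G_1$ starting at $u_1$ gives $\phi_1^-(u_1)=\phi(p_1)\,R_1(\lambda)$ where $R_1(\lambda)=\sum_{n\ge 1}\lambda^{-n}N_1(n)$ and $N_1(n)$ counts closed non-backtracking walks of length $n$ based at $u_1$ in $G_1$. The analogous identity $\phi_2^-(u_2)=\phi(\bar p_k)\,R_2(\lambda)$ holds on the $G_2$ side. Substituting into the path relations and eliminating $\phi(p_1)$ and $\phi(\bar p_k)$ produces the characteristic equation
$$R_1(\lambda)\,R_2(\lambda)\;=\;\lambda^{2k}.$$

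To close the argument, Perron--Frobenius applied to each irreducible $B_{G_i}$ gives $N_i(n)\le C_i\,m_i^{\,n}$ for a constant $C_i$ depending only on $G_i$, hence $R_i(\lambda)\le C_i'/(\lambda-m_i)$ for $\lambda>m_i$. In the worst case $m_1=m_2=m$, the characteristic equation becomes $\lambda^{2k}(\lambda-m)^2\le C_1'C_2'$, so $\lambda-m\le\sqrt{C_1'C_2'}/m^k$; if one of $m_1,m_2$ is strictly less than $m$, the corresponding $R_i$ stays bounded near $\lambda=m$ and one obtains the stronger estimate $\lambda-m=O(m^{-2k})$. Either way $\epsilon(k)\le C/m^k$.

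The main obstacle is extracting the clean rank-one coupling $\phi(p_1)\,\chi_1$ at $u_1$ (and its mirror at $u_2$) from the block structure of $B_{G_3}$; once the characteristic equation is in hand, the Perron--Frobenius bound on $R_i$ is routine. The remaining ingredients—irreducibility of $B_{G_i}$, convergence of the resolvent for $\lambda>m_i$, and monotonicity of $\gr$ under taking subgraphs—are already recorded in Section~\ref{nbm-facts}.
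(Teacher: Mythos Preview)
Your argument is correct and genuinely different from the paper's. The paper never touches the Perron eigenvector of $B_{G_3}$; instead it builds a test vector $\tilde x$ by patching together the Perron eigenvectors of $B_{G_1}$ and $B_{G_2}$ with geometrically decaying weights $m^{-t}$ along the connecting path, and then invokes the Collatz--Wielandt inequality $\lambda_1(B_{G_3})\le\max_e (B_{G_3}\tilde x)(e)/\tilde x(e)$ to read off the bound $m+C'm^{-k}$ directly.

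Your route---writing the eigenvalue equation on $G_1$ as a rank-one perturbation, inverting via the Neumann series, and eliminating to obtain the exact identity $R_1(\lambda)R_2(\lambda)=\lambda^{2k}$---is more informative: it yields the sharper $O(m^{-2k})$ when $m_1\ne m_2$, and in principle pins down the leading asymptotics of $\epsilon(k)$. The cost is that you need the estimate $N_i(n)\le C_i m_i^{\,n}$, which relies on all eigenvalues of $B_{G_i}$ on the spectral circle being semisimple (true for irreducible non-negative matrices, but a step beyond the bare Perron statement). The paper's test-vector argument is slightly more elementary in this respect: it only uses the existence of a positive Perron eigenvector for $B_{G_1}$ and $B_{G_2}$, together with Collatz--Wielandt, and never needs to control the full spectrum or sum a resolvent series.
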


\begin{proof}
The proof uses linear algebra.
By the discussion in~\ref{nbm-facts}, $\gr(G) = \lambda_1(B_G)$. 

We use the Collatz-Wielandt formula for non-negative matrices (see e.g.\  \cite[p.~31]{berman1994nonnegative}):
Let $A$ be an irreducible, $n \times n$ real matrix such that $A_{i,j} \geq 0$, and let $x \in \R^n$ be a vector such that $x_i > 0$. 
Then $\lambda_1(A) \leq \max_{i \in [n]} \frac{(Ax)_i}{x_i}$. 

Let $u \in G_1$ and $v \in G_2$ be the vertices in $G_1$ and $G_2$ which are connected to the $k$-path $u = w_0, w_1, \ldots, w_k = v$, let $B_1, B_2,B_3$ be the non-backtracking matrices of $G_1, G_2, G_3$ respectively, and let $x_1$ and $x_2$ be the Perron–Frobenius eigenvectors of $B_1$ and $B_2$; i.e. $x_{1,i},x_{2,i} > 0$ and $B_1x_1 = \lambda_1(B_1)x_1, B_2x_2 = \lambda_1(B_2)x_2$. 
By \cite[prop~2.3]{glover2020spectral}, both $B_1$ and $B_2$ are irreducible.

Denote the maximal eigenvalue $m = \max \{\lambda_1(B_1), \lambda_1(B_2)\}$. Define the vector $\Tilde{x}$, indexed by directed edges in $G_3$ as follows
where we choose $C_0, C_1$ later: 
\begin{align*}
    \Tilde{x}(e) = \begin{cases}
        x_1(e) & \text{ if } e \in G_1 \\
        x_2(e) & \text{ if } e \in G_2  \\ 
        m^{-t-k-1} C_0 & \text{ if } e = (w_{t-1}, w_t) \\
        m^{-t}C_1 & \text{ if } e = (w_t, w_{t-1})
    \end{cases}
\end{align*} 
Observe that this gives a non-zero value for every directed edge of $G_3$. 
Our goal now is to bound $ \max_{e \in G_3} f(e)$ with $f(e) := \frac{B_3\Tilde{x}(e)}{\Tilde{x}(e)}$. 

Let $e \in G_1$ such that both endpoints of $e$ are different from $u$. 
Then by our definition $f(e) = \lambda_1(B_1)$. 
Similarly, if $e \in G_2$ such that both endpoints are different from $v$, then $f(e) = \lambda_1(B_2)$. 

Let $e = (x,u)$ be an edge entering $u$ (not from the added path).
Then 
$$ \frac{B_3\Tilde{x} (e)}{\Tilde{x}(e)}  
\ = \ 
\frac{\sum_{e^{-1} \neq h = (u,t)} \Tilde{x}(h) + \Tilde{x}((u,w_1))}{\Tilde{x}(e)} 
\ =  \
\lambda_1(B_1) + \frac{m^{-k}C_0}{\Tilde{x}(e)} .$$

Let $e = (w_{t-1}, w_t)$ be an edge on the path with $w_{t+1} \neq v$. 
Then $$\frac{B_3 \Tilde{x}(e)}{\Tilde{x}(e)}
\ = \ 
\frac{m^{-t-k} C_0}{m^{-t-k-1} C_0} \ = \  m.$$ 

Let $e = (w_{k-1},v)$. 
\begin{align} \label{quantitiy}
    \frac{B_3 \Tilde{x}(e)}{x(e)} 
\ = \ 
\frac{\sum_{(v,t) = h \neq e^{-1}} \Tilde{x}(h)}{C_0 m}  \text{ ,}
\end{align}
and pick $C_0$ so that \eqref{quantitiy} is smaller than $m$ (this does not depend on $k$). 

Let $e = (t,v)$ be an edge entering $v$. Then 
$$ \frac{B_3\Tilde{x} (e)}{\Tilde{x}(e)} 
\ = \ 
\frac{\sum_{e^{-1} \neq h 
\ = \ 
(t,v)} \Tilde{x}(h) + \Tilde{x}((v,w_{k-1}))}{\Tilde{x}(e)}  
\  =\  
\lambda_1(B_2) + \frac{m^{-k}C_1}{\Tilde{x}(e)} .$$

Let $e = (w_t, w_{t-1})$ be an edge on the path returning to $u$ such that $w_{t-1} \neq u$. Then a similar computation gives  $f(e) = m$. 

Let $e = (w_1,u)$ be the edge entering $u$ from the path. 
Then 
\begin{align}\label{equation:quatity2}
    \frac{B_3\Tilde{x} (e)}{\Tilde{x}(e)} 
    \ = \ \frac{\sum_{e^{-1} \neq h = (u,t)} \Tilde{x}(h) + \Tilde{x}((u,w_1))}{\Tilde{x}(e)}  
    \ =\ 
    \frac{\sum_{e^{-1}\neq h = (u,t)} \Tilde{x}(h)}{C_1m} ,
\end{align}
and we choose $C_1$ so that \eqref{equation:quatity2} is smaller than $m$. 

Overall, $\max_{e} f(e) \leq m + C'm^{-k}$ for some constant $C'$, which proves the claim. 
\end{proof}

\begin{remark}
    The constant $C' = C'(G_1, G_2)$ depends on the graphs $G_1, G_2$.
    We invoke Lemma ~\ref{lemma:new_graph} thinking of $G_1$ and $G_2$ as constants, while the parameter $k$ goes to $\infty$. 
\end{remark}

\subsection{Construction of \texorpdfstring{$G$}{G}}
We construct $G$ in steps. 
Our main ingredient is the sequence of graphs $\{H_n\}_{n \in \N}$ given by Lemma~\ref{lemma:sequence_of_finite_graphs}.
Let $\alpha_n := \gr(H_n)$, with $\lim_{n\rightarrow \infty} \alpha_n = \alpha$. 
Let $G_0$ be a graph with one node, labeled with 
$1$, and no edges.

Let $v_1 \in V(H_1)$ be a vertex whose degree is less than $2r$ in $H_1$. 
Set $G_1$ to be the graph obtained by ``gluing'' $H_1$ to $G_0$ by adding an edge from $v_1$ to the vertex labeled $1$ in $G_0$. 
More precisely, $V(G_1) = V(G_0) \cup V(H_1)$ and $E(G_1) = E(G_0) \cup E(H_1) \cup \{1, v_1\}$. 
The growth rate of $G_1$ is equal to the growth rate of $H_1$. 

We proceed to construct $G_2$.
By Lemma~\ref{lemma:sequence_of_finite_graphs}, $ \alpha_1 < \alpha_2 < \alpha_3$.
We attach edges to the vertex $1 \in G_1$: $\{1,2\}, \{2,3\} ,\ldots \{k_1-1, k_1\}$ (where all the vertices labeled with integers are new vertices), and attach the vertex $k_1$ to a vertex $v_2 \in H_2$ with degree less than $2r$. 
By Lemma \ref{lemma:new_graph}, we can choose $k_1$ so that $\gr(G_2) \in (\alpha_2, \alpha_3)$. 

We construct $G_n$ similarly.
The important properties of $\{G_n\}$ are that:
\begin{enumerate}
\item $G_n$ is connected and the degrees of $G_n$ are in $\{2, \ldots, 2r\}$.
\item $\gr(G_n) \in (\alpha_{n}, \alpha_{n+1})$.
\item $G_n$ is a subgraph of $G_{n+1}$.
\end{enumerate}

The graph $G$ will be the outcome of this process. 
More formally, $V(G) = \cup_{n} V(G_n)$ and $E(G) = \cup_{n} E(G_n)$.

\begin{theorem}\label{thrm:growth-of-big-graph}
    $\gr(G) = \alpha$. 
\end{theorem}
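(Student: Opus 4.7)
The plan is to recognize Theorem~\ref{thrm:growth-of-big-graph} as a direct consequence of Theorem~\ref{thm:increasing_union} applied to the exhaustion $\{G_n\}$, combined with a squeeze argument using the interval bounds $\gr(G_n)\in(\alpha_n,\alpha_{n+1})$. The conceptual work has already been packaged into Theorem~\ref{thm:increasing_union} and Lemma~\ref{lemma:sequence_of_finite_graphs}; what remains is only to verify that the hypotheses match and to unwind the resulting limit.

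First I would confirm that $G$ carries an immersion to $B_r$. By construction $G$ is locally finite with every vertex degree at most $2r$, so Corollary~\ref{cor:creating immersions} (whose infinite case is Theorem~\ref{thrm:labeling-infinite-graph}) supplies a proper labeling of the edges of $G$ by letters in $\{1,\ldots,r\}\times\{\pm 1\}$, hence an immersion $G\to B_r$. Any such labeling automatically restricts to a proper labeling on each finite subgraph $G_n$, so the inclusions $G_n\hookrightarrow G$ are compatible with their immersions to $B_r$. I would then fix the vertex labeled $1\in V(G_0)$ as basepoint, and set $H=\pi_1(G,1)\le F_r$ and $H_n=\pi_1(G_n,1)\le F_r$.

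Next I would check the remaining hypotheses of Theorem~\ref{thm:increasing_union}, all of which are immediate from properties (1)--(3) of the construction: each $G_n$ is finite, connected, and based, the inclusions $G_n\subset G_{n+1}$ hold, and $G=\bigcup_n G_n$. Theorem~\ref{thm:increasing_union} then yields
\[
\gr(G)\;=\;\omega_{F_r}(H)\;=\;\lim_{n\to\infty}\omega_{F_r}(H_n)\;=\;\lim_{n\to\infty}\gr(G_n).
\]
Property (2) of the construction forces $\alpha_n<\gr(G_n)<\alpha_{n+1}$, and Lemma~\ref{lemma:sequence_of_finite_graphs} guarantees $\alpha_n\to\alpha$, so the squeeze theorem gives $\lim_n\gr(G_n)=\alpha$ and hence $\gr(G)=\alpha$.

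The main potential subtlety is the compatibility of labelings: one must know that a single global labeling of $G$ simultaneously realizes each $H_n$ as a subgroup of $F_r$ in a way consistent with the increasing union inside $F_r$. This is free, since any proper labeling of $G$ restricts to a proper labeling of each $G_n$; alternatively one could build the labeling inductively during the gluing process, which is always possible because the new path endpoints are attached at vertices of degree $<2r$ while the internal path vertices have full freedom. Beyond this bookkeeping, no further estimate is required, since the non-backtracking matrix machinery recalled in Section~\ref{nbm-facts} has already been absorbed into the proof of Theorem~\ref{thm:increasing_union}.
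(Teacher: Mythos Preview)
Your proposal is correct and follows essentially the same route as the paper, which proves the theorem in one line by invoking Theorem~\ref{thm:increasing_union}. You have simply spelled out the hypothesis-checking and the squeeze $\alpha_n<\gr(G_n)<\alpha_{n+1}\to\alpha$ that the paper leaves implicit, and your discussion of the labeling compatibility matches the paper's remark that the immersion is inherited from the inductive gluing.
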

\begin{proof}
This holds by Theorem~\ref{thm:increasing_union}.
\end{proof}



\section{Growth Rates and Spectra of Infinite Graphs?}
Let $B$ be an infinite, non-negative, irreducible matrix as in Subsection~\ref{nbm-facts}, corresponding (in some sense) to a strongly connected directed graph $G$ --- for instance, $B$ may be the adjacency matrix of $G$.  
Suppose that $B$ has uniformly bounded row and column degrees, so  $B$ defines a bounded linear operator on $\ell^p$ for some $p \in [1, \infty]$. A celebrated result in functional analysis, the Krein–Rutman Theorem, states that if $B$ is compact then the spectral radius of $B$ is an eigenvalue. But if $B$ is the adjacency matrix of a directed graph, it is compact iff the number of edges of $B$ is finite. 

\begin{question}
    Under what 
    nontrivial assumptions on $G$, does the growth rate $\gr(B)$ correspond to an eigenvalue of $B$?  
    When is $\gr(B)$ equal to the spectral radius of $B$ as an operator on $\ell^p$?
\end{question}
Using Lemma~\ref{lemma:finite-approximation}, one can show that $\gr(B)$ always belongs to the approximate spectrum of $B$.
Moreover, if $B$ is self-adjoint (and hence $G$ is undirected), then $\gr(B)$ is exactly the spectral radius of $B$ acting on $\ell^2$; see \cite{mohar1989survey}.



\bibliographystyle{abbrv} 
\bibliography{bib}

\end{document}